\documentclass[12pt]{a_t} 

\usepackage{graphicx}
\usepackage{subfigure}

\year{2024}

\authors{
N.M.~Markovich, DrHab\\
V.A. Trapeznikov Institute of Control Sciences of Russian Academy of Sciences, Moscow, Russia,\\
M.~Vai\v{c}iulis, PhD\\
Vilnius University, Vilnius, Lithuania}

\begin{document}

\title{INVESTIGATION OF TRIANGLE COUNTS IN GRAPHS EVOLVED BY UNIFORM CLUSTERING ATTACHMENT}%


\maketitle

\begin{abstract}
The clustering attachment model introduced in the paper Bagrow and Brockmann (2013) may be used as an evolution tool of random networks. We propose a new clustering attachment model which can be considered as the limit of the former clustering attachment model as model parameter $\alpha$ tends to zero. We focus on the study of a total triangle count that is considered in the literature as an important characteristic of the network clustering. It is proved that total triangle count tends to infinity a.s. for the proposed model. 
Our 
simulation study is used for the modeling of sequences of triangle counts. It is based on the interpretation of the clustering attachment as a generalized  P\'{o}lya-Eggenberger urn model that is introduced here at first time.
\\
	\textit{Keywords:} clustering attachment, preferential attachment, random graph, evolution, urn model.
\end{abstract}

\section{Introduction} \label{sec0}

\par
In this paper we discuss an evolution model of undirected random graphs by
the clustering attachment (CA) rule introduced in \cite{Bag}. The attachment of a newly appending node to 
existing nodes by the CA typically lowers its clustering coefficient, in contrast to the well-known preferential attachment (PA) that executes so called 'rich-get-richer' models. This leads to numerous new phenomena like a community formation around new nodes, bursts of a modularity and light-tailed distributions of node degrees \cite{Bag}. The idea of the CA is that existing nodes are drawn not towards hubs,
but towards densely connected groups that is usual for social human behaviour.
\par
We propose a new clustering attachment model which can be considered as the limit of the former clustering attachment model in \cite{Bag} as model parameter $\alpha$ tends to zero.
The latter model  This model boils down to the sequential selection of existing nodes
by a new node with equal probability. The following example of such an attachment can be given. An individual comes to an unfamiliar community (for a new job) and chooses someone to be friends at random, with equal probability. This can happen to every beginner. If a newcomer joined an existing node, this does not mean that another new node will join it in the future. On the contrary, the chances of joining decrease as there are more and more nodes to choose from.
\par
A triangle of connected nodes is the most studied subgraph that can be considered as a basic community. It seems, it was introduced first in \cite{New2009}. 
The triangle count  relates  to calculation of the clustering coefficient. The latter  is an important  representation of a clustering structure in random graphs since it expresses the fraction of connected neighbors of a node. The limiting behavior of the number of triangles attracts the attention of many researchers, see \cite{LiuDong}, \cite{Bobkov}, \cite{Gar} among others. Our objective is to find a limit behaviour of the total triangle counts in the evolving CA graph.
Remarkable is that the triangle counts were investigated for the PA evolution model. Here, we deal with the CA setting that is a novelty.
 \par
Let $G_n=\left( V_n, E_n\right)$,
$n=1,2, \dots$ denote the sequence of random graphs in which $n$ may
be interpreted as the discrete time since graphs are generated
during the evolution. Here, $V_n$ is the set of nodes and $E_n$ is
the set of edges. Here and further, we apply notations used in \cite{Hof}.  Let $\# A$ denote the cardinality (the number of
elements) of arbitrary finite set $A$. The graph $G_n$ consists of
$\#V_1+n-1$ nodes and $\#E_1+m(n-1)$ edges, i.e., for any $n \in
\mathbb{N}$,
$$
\#V_n =\#V_1+n-1, \quad \#E_n=\#E_1+m(n-1),
$$
where natural $m \ge 2$ is a parameter of the  clustering attachment
model.
\par
Before providing postulates that describe how new edges and nodes
are formed, let us recall that the clustering coefficient of node $i \in
V_n$ is defined as
\begin{eqnarray*}
c_{i,n}&=& \left\{
  \begin{array}{ll}
    0, & \hbox{$D_{i,n}=0$ or $D_{i,n}=1$,} \\
    2 \Delta_{i,n}/\left(D_{i,n}\left(D_{i,n}-1\right) \right), & \hbox{$D_{i,n} \ge 2$,}
  \end{array}
\right.
\end{eqnarray*}
where $D_{i,n}$ and $\Delta_{i,n}$ are the degree and  number of
triangles of node $i$, respectively. Let us define more general definition of the CA model.
\\
\begin{definition}
The graph $G_{n+1}$ is obtained
from $G_n$  according to the rule, which consists of two parts:

(i) the deterministic part: a new node $\#V_1+n \in V_{n+1}
\setminus V_n$ is appended to $G_n$;

(ii) the probabilistic part: each node $i \in V_n$ is equipped with
the weight
\begin{equation}\label{a02}
p_{i,n}=\frac{f\left(c_{i,n} \right)+\epsilon}{\sum_{j \in V_n}
\left( f\left(c_{j,n} \right)+\epsilon \right)},
\end{equation}
where the  attachment function $f:[0,1] \to [0, \infty)$ is
deterministic, non-decreasing, while $\epsilon \ge 0$  is another
parameter of the CA model. Choose the existing
nodes $i_1, \dots i_m \in V_n$ by a successive sampling and connect
each of them with the newly appended node $\#V_1+n$.
\end{definition}
By replacing $c_{i,n}$ by $D_{i,n}$ in (\ref{a02}), we turn
to the definition of the PA model, see, e.g., pg. 5
in \cite{Wan}. For the CA model
introduced in \cite{Bag}, 
weights (\ref{a02}) 
were constructed by the
attachment function
\begin{equation}\label{a02-0}
    f(x)=x^{\alpha}, \ \alpha>0.
\end{equation}
Further we will assume, in addition, that $f(x)$ 
in (\ref{a02}) satisfies conditions
\begin{equation}\label{a02-1}
    f(0)=0, \quad f(x)>0, \ 0<x\le 1.
\end{equation}
Assumptions regarding the initial graph $G_1$ depend on the parameter
$\epsilon$. If $\epsilon>0$ holds, then it is enough to assume that
\begin{equation}\label{a03}
    \#V_1 \ge m.
\end{equation}
If $\epsilon=0$, then 
the assumption 
stronger than (\ref{a03}) is required.
Namely, we assume that
\begin{equation}\label{a04}
    \#\tilde{V}_1 \ge m,
\end{equation}
where the sequence of sets $\tilde{V}_1, \tilde{V}_2, \dots$ is
defined as follows: $\tilde{V}_n=\{ i \in V_n: \
f\left(c_{i,n}\right)>0\}$, $n \ge 1$.
Under assumption (\ref{a02-1}) the
sequence $\tilde{V}_1, \tilde{V}_2, \dots$  is non-decreasing in the sense that ${\tilde
V}_n \subseteq
{\tilde V}_{n+1}$ for each $n \in \mathbb{N}$. Thus,
from (\ref{a04}) it follows $\#\tilde{V}_n \ge m$ for each $n \in
\mathbb{N}$. The last inequality ensures that for any $n \in
\mathbb{N}$ there is at least one collection of nodes $i_1, \dots,
i_m \in V_n$ that can be
chosen by 
the  successive sampling with a positive probability.

The  CA model has a Markov property in the sense
that the 
obtaining $G_{n + 1}$ once
$G_n$ is known does not require a prior knowledge of earlier graphs
$G_1, G_2, \dots, G_{n-1}$.



The paper is organized as follows. The main result is given in section  \ref{sec1.1}. Section  \ref{sec1.2} contains a survey of known results.
In Section \ref{sec2} we discuss
the  successive sampling for the CA model
considered in Theorem \ref{main}. The proves of Theorem \ref{main}, corrolary \ref{cor} and lemms required for the proof of the theorem are
given in Section \ref{sec3}. Section \ref{sec4} contains
results of a 
simulation study to demonstrate a growth of the total
triangle counts.  Section \ref{sec7} contains a gratitude. Conclusions are presented in  Section \ref{sec5}.

\section{Main result}\label{sec1.1}

The number of triangles in the graph $G_n$ is expressed in terms of the number of triangles of nodes
$$
\Delta_n=\frac{1}{3}\sum_{i \in {\tilde V}_n} \Delta_{i,n}.
$$
Let us consider the CA model with parameters
$\epsilon=0$, $m=2$ and the attachment
function
\begin{equation}\label{a05}
f(x)=\left\{
       \begin{array}{ll}
         0, & \hbox{$x=0$,} \\
         1, & \hbox{$x>0$.}
       \end{array}
     \right.
\end{equation}
A sequence of functions $f_n=x^{1/n}$, $n=1,2,\dots$ converges
to $f(x)$ in (\ref{a05}) point-wise. This allows us to call
such CA model
as a limit model (as
$\alpha \downarrow 0$) of the  CA model with
parameters $\epsilon=0$, $m=2$ and the attachment function (\ref{a02-0}).

\begin{theorem} \label{main}
Let  $G_n$, $n=1,2, \dots$ be a sequence of  graphs generated by the  CA model with
parameters $\epsilon=0$, $m=2$ and the attachment function given in
(\ref{a05}).
Assume that the initial graph $(V_1, E_1)$ is  finite and
satisfies condition (\ref{a04}). Then, as $n \to \infty$,
\begin{equation}\label{a06}
    \Delta_n \to \infty,\qquad n\to\infty\qquad \ {\mbox a.s.}
\end{equation}
\end{theorem}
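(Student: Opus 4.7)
The plan is to reduce (\ref{a06}) to a single-line conditional Borel--Cantelli argument for a $\{0,1\}$-valued process that records whether a new triangle is created at each step.

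First, I would unpack the dynamics under the attachment function (\ref{a05}) with $\epsilon=0$. A vertex $i\in V_n$ has positive weight $p_{i,n}$ in (\ref{a02}) iff $c_{i,n}>0$, i.e.\ iff $i\in\tilde V_n$. Hence at step $n+1$ the new vertex picks two parents $i_1,i_2$ uniformly and without replacement from $\tilde V_n$, and, since the only candidate new triangle involves $n+1$ together with both parents, exactly one triangle is created precisely when $\{i_1,i_2\}\in E_n$. Writing $X_n:=\Delta_{n+1}-\Delta_n\in\{0,1\}$ and $\mathcal F_n:=\sigma(G_1,\dots,G_n)$, and letting $E(\tilde V_n)$ denote the number of edges of $G_n$ with both endpoints in $\tilde V_n$, this gives
\[
\PP(X_n=1\mid\mathcal F_n)=\frac{2\,E(\tilde V_n)}{\#\tilde V_n\,(\#\tilde V_n-1)}.
\]
A parallel bookkeeping shows that the newly appended vertex enters $\tilde V_{n+1}$ iff it becomes part of a triangle at step $n+1$; no other vertex changes its $\tilde V$-status (monotonicity was noted after (\ref{a04})), and therefore $\#\tilde V_{n+1}=\#\tilde V_n+X_n$. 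By telescoping, $\Delta_n\to\infty$ iff $\sum_n X_n=\infty$.

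Second, I would lower-bound $E(\tilde V_n)$ structurally. Every vertex of $\tilde V_n$ belongs to some triangle of $G_n$, and the other two vertices of that triangle must themselves lie in $\tilde V_n$; hence each vertex of $\tilde V_n$ has at least two neighbors inside $\tilde V_n$. Summing degrees yields $E(\tilde V_n)\ge \#\tilde V_n$, so, combining with the trivial bound $\#\tilde V_n\le \#V_n=\#V_1+n-1$,
\[
\PP(X_n=1\mid\mathcal F_n)\;\ge\;\frac{2}{\#\tilde V_n-1}\;\ge\;\frac{2}{\#V_1+n-2}.
\]
The right-hand side sums to $+\infty$ deterministically, so $\sum_n\PP(X_n=1\mid\mathcal F_n)=\infty$ a.s.

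Third, I would invoke the conditional (L\'evy) form of the Borel--Cantelli lemma applied to the adapted $\{0,1\}$-sequence $(X_n)$: the events $\{\sum_n X_n=\infty\}$ and $\{\sum_n\PP(X_n=1\mid\mathcal F_n)=\infty\}$ coincide a.s. Hence $\sum_n X_n=\infty$ a.s., and therefore $\Delta_n\to\infty$ a.s., giving (\ref{a06}).

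The genuinely delicate step here is not probabilistic but combinatorial: one must verify carefully that the dynamics of $\tilde V_n$ reduce to the recursion $\#\tilde V_{n+1}=\#\tilde V_n+X_n$ and that the ``closure'' property of triangle-membership inside $\tilde V_n$ yields the edge bound $E(\tilde V_n)\ge \#\tilde V_n$. Once these two structural facts are pinned down, the probabilistic core of the argument collapses to the summability estimate above, driven by the crude growth $\#\tilde V_n=O(n)$.
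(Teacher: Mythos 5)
Your proof is correct, but it follows a genuinely different route from the paper's. The paper works with the unconditional probabilities $P(B_n)$ of the triangle-creation events and verifies the hypotheses of a Borel--Cantelli lemma for pairwise negatively correlated events: this forces it to prove the monotonicity of the explicit conditional probabilities $p_n(G_k)$ (Lemma 1), the comparison $P(B_n)>P(B_n\mid B_1\cap\dots\cap B_{n-1})$ (Lemma 2), and, most laboriously, the pairwise negative correlation $P(B_u\cap B_v)\le P(B_u)P(B_v)$ (Lemma 3), before bounding $\sum_n P(B_n)$ from below by a harmonic series. You instead condition on the natural filtration, bound $\PP(X_n=1\mid\mathcal F_n)=\#(\tilde E_n\cap E_n)/\#\tilde E_n$ from below deterministically by $2/(\#V_1+n-2)$ --- using the clean observation that the subgraph induced on $\tilde V_n$ has minimum degree at least $2$, hence at least $\#\tilde V_n$ edges, together with $\#\tilde V_n\le\#V_n$ --- and then invoke the L\'evy (conditional) extension of the Borel--Cantelli lemma, under which $\{\sum_n X_n=\infty\}$ and $\{\sum_n\PP(X_n=1\mid\mathcal F_n)=\infty\}$ coincide a.s. This eliminates the entire correlation analysis of the paper's Lemma 3 at the modest cost of appealing to a martingale-based version of Borel--Cantelli rather than the second-moment version; your induced-minimum-degree argument is also simpler and sharper than the paper's inductive proof of $\#(\tilde E_k\cap E_k)>\#\tilde V_k-1$. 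Moreover, taking expectations of your pointwise conditional bound immediately gives $E\Delta_n-\Delta_1\ge\sum_{j=1}^{n-1}2/(\#V_1+j-2)$, a logarithmic lower bound of the same type as (and with a better constant than) the paper's Corollary 2. The one point worth stating explicitly in a final write-up is the measurability bookkeeping for the conditional Borel--Cantelli lemma (the event $\{X_n=1\}$ is $\mathcal F_{n+1}$-measurable and is conditioned on $\mathcal F_n$), and the justification that $W_n$ is uniform on the pairs from $\tilde V_n$, which is the paper's Proposition 1; both are routine.
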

As it will be shown in Section \ref{sec2}, the nodes of the sequence ${\tilde V}_1, {\tilde V}_2, \dots$ play a key role in the evolution of the CA model.
\begin{corollary} \label{cor0}
    Under the assumptions of Theorem \ref{main}, we get
$\# {\tilde V}_n \to \infty$  as $n \to\infty$ a.s..
\end{corollary}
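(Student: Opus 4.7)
The plan is to derive Corollary~\ref{cor0} as an almost immediate consequence of Theorem~\ref{main} via a deterministic upper bound on $\Delta_n$ in terms of $\#\tilde V_n$. The key observation is that with the attachment function~(\ref{a05}), the set $\tilde V_n=\{i\in V_n:f(c_{i,n})>0\}$ coincides with the set of nodes whose clustering coefficient is positive, which, from the defining formula for $c_{i,n}$, is exactly the set of nodes $i$ with $D_{i,n}\ge 2$ and $\Delta_{i,n}\ge 1$. Since any vertex of a triangle in $G_n$ automatically has at least one incident triangle and degree at least $2$, all three vertices of every triangle necessarily lie in $\tilde V_n$.

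The second step is to convert this observation into a quantitative bound. Each distinct triangle of $G_n$ corresponds to a $3$-element subset of $\tilde V_n$, so the total number of distinct triangles is at most the number of such subsets, which is bounded above by $(\#\tilde V_n)^3$. Since each triangle contributes exactly $1$ to three of the summands $\Delta_{i,n}$, the normalization in $\Delta_n=\frac{1}{3}\sum_{i\in\tilde V_n}\Delta_{i,n}$ makes $\Delta_n$ equal to the total number of distinct triangles in $G_n$. Hence deterministically
\[
\Delta_n\;\le\;(\#\tilde V_n)^3.
\]

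Combining this bound with the conclusion $\Delta_n\to\infty$ a.s.\ of Theorem~\ref{main} gives $(\#\tilde V_n)^3\to\infty$ a.s., and therefore $\#\tilde V_n\to\infty$ a.s., which is the statement of the corollary. There is no real technical obstacle in this argument; the only point requiring verification is the identification of $\tilde V_n$ with the set of nodes that belong to at least one triangle, and this is immediate from the explicit form~(\ref{a05}) of $f$ together with the definition of the clustering coefficient $c_{i,n}$.
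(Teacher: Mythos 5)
Your proof is correct, but it takes a genuinely different route from the paper. The paper derives the corollary from the exact identity (\ref{c02-1}), $\Delta_n=\#\tilde V_n+\left(\Delta_1-\#\tilde V_1\right)$ for $n\ge 2$, which in turn follows from the observation that at each evolution step the event $B_j$ simultaneously creates exactly one new triangle and exactly one new node of $\tilde V_{j+1}$ (the appended node), so that $\Delta_n-\Delta_1$ and $\#\tilde V_n-\#\tilde V_1$ are the \emph{same} sum of indicators $\sum_{j=1}^{n-1}I\{B_j\}$; the corollary is then immediate from (\ref{a06}). You instead use only the static combinatorial fact that every triangle of $G_n$ has all three vertices in $\tilde V_n$ (via the identification $\tilde V_n=\{i\in V_n:\Delta_{i,n}>0\}$, which the paper also records in Section \ref{sec2}), giving the crude bound $\Delta_n\le(\#\tilde V_n)^3$, which together with $\Delta_n\to\infty$ a.s.\ forces $\#\tilde V_n\to\infty$ a.s. Your argument is more elementary and more robust---it makes no use of the attachment dynamics and would apply to any graph sequence with $\Delta_n\to\infty$---but it yields less: the paper's identity shows that $\#\tilde V_n$ and $\Delta_n$ differ by a constant and hence grow at exactly the same rate, a fact the paper exploits later in the urn interpretation and the simulation algorithm.
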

%
Let the sequence of the sets
${\tilde E}_1, {\tilde E}_2, \dots$ be such that for any $n\ge 1$,
$\left({\tilde V}_n, {\tilde E}_n \right)$
is a complete graph. In other words, ${\tilde E}_n$ denotes the set of all possible edges between nodes in ${\tilde V}_n$. Some edges in ${\tilde E}_n$ may not exist in the graph $G_n$.
The next corollary contains a lower bound of the expectation $E \Delta_n$.
\begin{corollary} \label{cor}
    Under the assumptions of Theorem \ref{main}, it holds
    \begin{equation}\label{a07}
E \Delta_n -\Delta_1>
\frac{\#\left({\tilde E}_1 \cap E_1 \right)}{ 3 \# {\tilde E}_1} \ln(n-1),
\quad n \ge 2.
\end{equation}
\end{corollary}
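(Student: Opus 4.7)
The plan is to decompose $E\Delta_n-\Delta_1 = \sum_{k=1}^{n-1} E[\Delta_{k+1}-\Delta_k]$ and lower-bound the $k$-th summand by $A_1/(3k\#\tilde E_1)$, where $A_1:=\#(E_1\cap\tilde E_1)$. With $m=2$ and $f$ as in (\ref{a05}), the attachment selects two distinct nodes $i_1,i_2$ uniformly from $\tilde V_k$; a new triangle is created iff the pair is already joined in $G_k$, so $E[\Delta_{k+1}-\Delta_k\mid\mathcal{F}_k] = A_k/\#\tilde E_k$ with $A_k:=\#(E_k\cap\tilde E_k)$ and $\mathcal{F}_k:=\sigma(G_1,\dots,G_k)$.

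The key deterministic identities come from the coupling of triangle formation with $\tilde V$: whenever a triangle forms the new vertex enters $\tilde V$ (its clustering coefficient jumps to $1$) and its two new edges land in $E\cap\tilde E$; otherwise all three sets are unchanged. Writing $N_k:=\#\tilde V_k$, this yields $N_{k+1}-N_k = \Delta_{k+1}-\Delta_k$ and $A_{k+1}-A_k = 2(\Delta_{k+1}-\Delta_k)$, so $A_k=A_1+2(N_k-N_1)\ge A_1$. Replacing $A_k$ by $A_1$ in the numerator and applying the Jensen inequality $E[1/X]\ge 1/E[X]$ to $X=\#\tilde E_k$ reduces the task to showing $E[\#\tilde E_k]=O(k)$ with the right constant.

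The main step is the drift bound. From $\#\tilde E_{k+1}-\#\tilde E_k = N_k(N_{k+1}-N_k)$ together with $A_k=A_1+2(N_k-N_1)$, a direct computation gives
$$
E[\#\tilde E_{k+1}-\#\tilde E_k\mid\mathcal{F}_k] = \frac{2A_k}{N_k-1} = 4 + \frac{2A_1-4N_1+4}{N_k-1},
$$
which is monotone in $N_k\ge N_1$ and therefore wedged between $2A_1/(N_1-1)$ and $4$. Any vertex in $\tilde V_1$ belongs to a triangle and so drags the other two vertices of that triangle into $\tilde V_1$, giving $N_1\ge 3$; combined with $A_1\le\binom{N_1}{2}$, one checks that $\max\bigl(2A_1/(N_1-1),\,4\bigr)\le 3\,\#\tilde E_1$. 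Iterating yields $E[\#\tilde E_k]\le\#\tilde E_1(3k-2)\le 3k\,\#\tilde E_1$, hence $E[\Delta_{k+1}-\Delta_k]\ge A_1/(3k\,\#\tilde E_1)$, and summation together with the strict bound $\sum_{k=1}^{n-1}1/k>\ln(n-1)$ finishes the argument.

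The delicate point is precisely this uniform-in-$k$ drift bound: the naive pathwise estimate $N_k\le N_1+k-1$ only yields $O(1/k^2)$ lower bounds on each $E[\Delta_{k+1}-\Delta_k]$ and hence a bounded (non-logarithmic) sum. The saving is the exact identity $A_k=A_1+2(N_k-N_1)$, which keeps $2A_k/(N_k-1)$ bounded uniformly in $k$, together with the structural inequality $N_1\ge 3$ that pins down the numerical constant $3$ appearing in (\ref{a07}).
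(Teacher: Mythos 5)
Your proof is correct, and it reaches the bound (\ref{a07}) by a genuinely different route from the paper's. Both arguments start from the same decomposition $E\Delta_n-\Delta_1=\sum_{k=1}^{n-1}P(B_k)$ and end with the harmonic sum $H_{n-1}>\ln(n-1)$; the difference is entirely in how $P(B_k)$ is bounded below by a term of order $1/k$. The paper conditions on the worst-case history: Lemmas \ref{lem0} and \ref{lem2} show that the conditional probability $P\left(B_k\mid \left(B_1\cap\dots\cap B_{k-1}\right)_{\ell}\right)=p_{k-\ell}(G_1)$ is smallest when every previous step produced a triangle, whence $P(B_k)>p_k(G_1)$, and then the explicit expression (\ref{c03-n1}) is estimated; this requires the combinatorial decomposition into the events $\left(B_1\cap\dots\cap B_{k-1}\right)_{\ell}$ and the induction proving $\#\left({\tilde E}_k\cap E_k\right)>\#{\tilde V}_k-1$. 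You instead write $P(B_k)=E\left[A_k/\#{\tilde E}_k\right]$, freeze the numerator via the exact pathwise identity $A_k=A_1+2\left(\#{\tilde V}_k-\#{\tilde V}_1\right)\ge A_1$ (which agrees with (\ref{c01})--(\ref{c02}) and the recursions preceding (\ref{c03-n1})), apply Jensen to pass to $A_1/E\left[\#{\tilde E}_k\right]$, and control $E\left[\#{\tilde E}_k\right]$ by the uniform drift bound $E\left[\#{\tilde E}_{k+1}-\#{\tilde E}_k\mid\mathcal{F}_k\right]=2A_k/\left(\#{\tilde V}_k-1\right)\le\max\left(2A_1/(\#{\tilde V}_1-1),4\right)\le 3\#{\tilde E}_1$; the numerical checks ($\#{\tilde V}_1\ge 3$ from (\ref{a04}), $A_1\le\#{\tilde E}_1$, strictness from $A_1\ge 1$) all go through. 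Your route buys a shorter, self-contained and more probabilistic proof of the corollary that bypasses Lemmas \ref{lem0}, \ref{lem2} and the event algebra entirely, and it makes transparent why the constant is $\#\left({\tilde E}_1\cap E_1\right)/(3\#{\tilde E}_1)$: it is the ratio of the initial numerator to the maximal per-step growth of the denominator. What it does not give is the negative-correlation Lemma \ref{lem3} needed for the Borel--Cantelli argument in Theorem \ref{main}, so the paper's heavier machinery is reused for the corollary at no extra cost, whereas your argument is tailored to the expectation bound alone.
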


\section{Related works}\label{sec1.2}
Let us firstly review the literature dealing with the CA model.
The CA rule was studied in \cite{Bag} as a
pioneer paper. The term 'clustering' with regard to social networks
was used in \cite{New} for the  time evolution in scientific
collaboration networks where the average clustering coefficient is high. In
contrast to the PA which is intensively studied
in the literature due to its application in real-world networks like Internet, the
study of the CA seems to be did not get noticeable
development.

In \cite{Bag}, the CA model with parameters  $\epsilon\ge 0$, $m=2$
and  the attachment function (\ref{a02-0}) is considered  using heuristics and Monte-Carlo simulations.
The main conclusions of \cite{Bag} are the following: (i)
the model parameters $\epsilon$, $m$ and  $\alpha$  impact
on bursts  of the modularity time series built during evolution
with a significant role of $\alpha$ (see, e.g., \cite{New2}, \cite{Arenas}
for definitions of the modularity); (ii) fluctuations of the modularity for
$\alpha >0$ are far larger than observed for $\alpha= 0$; (iii)
the consecutive growth and decay of the modularity values reflect
to the arising of dense communities or visa versa, and in general,
the evolution of the communities; (iv) the average clustering
coefficient increases significantly as $\alpha$
increases.  In \cite{Mar}, the impact of
$\alpha$ on the modularity sequence is investigated for the same model as in  \cite{Bag}, but with an uniform node deletion at each evolution step.
Based on Monte-Carlo simulations,  a weak clustering of the modularity and a weak heaviness of node degree
distribution tails were observed in \cite{Mar} as $\alpha>0$.
Strong consecutive  bursts of the modularity
and
light tail of  the node degree distribution were recognised in \cite{Mar} for the case $\alpha=0$.

Our paper is devoted  to the limit theorem for the total triangle count in the graph. Theorem \ref{main} is the first theoretical result for
the CA model. Let us recall the corresponding results for the PA model and several other models.
The PA model was first introduced in \cite{Bar1}, see also \cite{Bar2}. In \cite{Bol}  the expected number of triangles is proved to be of rate 
$\ln^3(n)$ for the original Albert-Barab\'{a}si model (that is the PA model with $\epsilon=0$, $m=2$ and the attachment function $f(x)=x$).
The PA model with $\epsilon>0$, $m \ge 2$ and $f(x)=x$ was studied in \cite{Ege}.
In \cite{Ege} it is observed that
the expected number of triangles is of order $\ln(n)$. In \cite{Pro} (see also references therein) and \cite{Gar} results regarding the number of
triangles for a broad class of PA models  can be found.

Let $\tau$ denote the exponent of a power-law distribution which, as a rule,  approximates well the node degree distribution in real-world networks. The  relation
\begin{equation} \label{a08}
\frac{\Delta_n}{n^{3(3-\tau) / 2}}  \ {\buildrel \rm p \over \to} \ C, \quad n \to \infty
\end{equation}
holds for several random graph models. Here, ${\buildrel \rm p \over \to}$ denotes the convergence in probability
and the constant $C>0$ depends on parameters of a concrete model.
In particular, in \cite{Ste} it was shown that (\ref{a08}) holds for the inhomogeneous random graphs.
If $\tau< 7/3$, the total number of triangles satisfies (\ref{a08}) for a geometric inhomogeneous random graph, while the
normalization in (\ref{a08}) must be replaced by $n$ if $\tau> 7/3$ holds, see \cite{Mic}. In \cite{Gao}, it is obtained that
(\ref{a08}) holds for uniform random graphs and the erased configuration model.

The convergence with rate $O\left(n^{-1 / 2}\right)$ in total
variation between the distribution of triangle counts and a Poisson
distribution in generalized random graphs with random vertex weights introduced in \cite{Bri}
is proved  in \cite{Bobkov}.

%

\section{Consecutive uniform sampling of nodes} \label{sec2}

Let $G_n$ be an observed graph. Our objective here is to show that the sets $\tilde{E}_n$ and $\tilde{E}_n \cap E_n$ are enough to know for the evolution by the CA.
By combining 
(\ref{a02}) and (\ref{a05}) we get the equivalent definition of ${\tilde V}_n$:
$\tilde{V}_n=\{ i \in V_n: \ \Delta_{i,n}>0\}$.
By substituting
$\epsilon=0$, $m=2$ and (\ref{a05}) into (\ref{a02}) we obtain that
attachment weights $p_{i,n}$, $i \in V_n$ are uniformly distributed at $\tilde{V}_n$:
\begin{equation} \label{b01}
p_{i,n}=
\left\{
  \begin{array}{ll}
    0, & \hbox{$i \in V_n\setminus {\tilde V}_n$,} \\
    1/\#{\tilde V}_n, & \hbox{$i \in {\tilde V}_n$.}
  \end{array}
\right.
\end{equation}
By (\ref{b01})  it follows that an appended node $\# V_1+n$ can be attached with positive probability only   to existing nodes $\{i \in V_n$: $\Delta_{i,n}>0\}$ involved into at least one triangle.

Let the sequence of sets $\mathcal{E}_1, \mathcal{E}_2, \dots$ be such that for any $n \ge 1$, the pair $\left(V_n, \mathcal{E}_n \right)$
is a complete graph.



Let $W_n$ denote a pair of nodes $\{i_1, i_2\}$ from $V_n$, chosen by applying the successive sampling (or random sampling without return). The  successive sampling begins by selecting node $i_1$ from $V_n$ with probability  (\ref{b01}).
 The probabilities of remaining $\#V_n-1$ nodes
are then re-normalized as follows: $p_{i,n}/(1-p_{i_1,n})$. The process is repeated by
selecting the node $i_2$. 
It does not matter for the evolution by the CA model
which node is chosen first in the pair.
Thus, further we will assume that $W_n$ is a two-dimensional random vector with values in $\mathcal{E}_n$.

\begin{proposition} \label{prop1}
Random vector $W_n$ is uniformly
distributed on ${\tilde E}_n$.
\end{proposition}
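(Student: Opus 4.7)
The plan is to unpack the successive sampling directly, using the explicit form of $p_{i,n}$ given in (\ref{b01}), and then check that the induced distribution on unordered pairs is the uniform measure on $\tilde{E}_n$. Write $k=\#\tilde{V}_n$; by assumption (\ref{a04}) (and the monotonicity of $\tilde{V}_n$ noted before the theorem) we have $k \ge m = 2$, so the sampling below is well-defined.

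First I would describe the first draw. By (\ref{b01}), the initial node $i_1$ takes value $j \in \tilde{V}_n$ with probability $1/k$ and value $j \in V_n \setminus \tilde{V}_n$ with probability $0$. In particular $i_1 \in \tilde{V}_n$ almost surely. Next I would describe the second draw. Conditionally on $i_1$, the re-normalized weights are $p_{i,n}/(1-p_{i_1,n})$, which by (\ref{b01}) equals $0$ for $i \in V_n \setminus \tilde{V}_n$ and $(1/k)/(1-1/k) = 1/(k-1)$ for $i \in \tilde{V}_n \setminus \{i_1\}$. Hence $i_2$ is uniform on $\tilde{V}_n \setminus \{i_1\}$, so for any ordered pair $(j_1,j_2)$ of distinct elements of $\tilde{V}_n$,
\begin{equation*}
\PP\bigl( (i_1,i_2) = (j_1,j_2) \bigr) = \frac{1}{k}\cdot\frac{1}{k-1} = \frac{1}{k(k-1)}.
\end{equation*}

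Then I would pass to the unordered pair $W_n = \{i_1,i_2\}$. As the paragraph preceding the proposition explains, $W_n$ is regarded as taking values in $\mathcal{E}_n$, i.e.\ as an unordered pair. For any edge $\{j_1,j_2\} \in \tilde{E}_n$ the two orderings $(j_1,j_2)$ and $(j_2,j_1)$ both contribute, yielding
\begin{equation*}
\PP\bigl( W_n = \{j_1,j_2\} \bigr) = \frac{2}{k(k-1)},
\end{equation*}
while for $\{j_1,j_2\} \in \mathcal{E}_n \setminus \tilde{E}_n$ at least one endpoint lies outside $\tilde{V}_n$ and the probability is $0$. Since $\#\tilde{E}_n = \binom{k}{2} = k(k-1)/2$, this is exactly the uniform distribution on $\tilde{E}_n$, which proves the proposition.

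There is really no serious obstacle: the result is essentially a bookkeeping verification once one observes that (\ref{b01}) makes the attachment weights uniform on $\tilde{V}_n$ and zero elsewhere, so sampling without replacement preserves uniformity on the pair. The only point that requires a moment of care is the passage from ordered to unordered pairs and the check that the normalizing constant matches $\#\tilde{E}_n$; I would make this explicit to keep the argument self-contained, since this identification is what is used repeatedly in the subsequent analysis of the evolution of $(\tilde{V}_n,\tilde{E}_n\cap E_n)$.
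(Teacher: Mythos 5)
Your proposal is correct and follows essentially the same route as the paper: the paper's identity $P(W_n=\{i,j\})=P(A_{n,j}|A_{n,i})P(A_{n,i})+P(A_{n,i}|A_{n,j})P(A_{n,j})=\tfrac{2}{\#\tilde V_n(\#\tilde V_n-1)}$ is exactly your ordered-pair computation $\tfrac{1}{k}\cdot\tfrac{1}{k-1}$ summed over the two orderings. The only cosmetic difference is that you assign probability zero to pairs outside $\tilde E_n$ directly from the renormalized weights, whereas the paper deduces it by noting the probabilities on $\tilde E_n$ already sum to one.
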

\begin{proof}
Let $\{i, j\} \in {\tilde E}_n$. Let us consider for each node $i \in {\tilde V}_n$ the
random event
$A_{n,i}
\equiv\{\mbox{node} ~i~\mbox{is ~ chosen} \}$.
By the law of a total probability,
\begin{eqnarray}
P\left(W_n=\{i,j\}\right) &=& P(A_{n,j}|A_{n,i})P(A_{n,i})+P(A_{n,i}|A_{n,j})P(A_{n,j})
\nonumber \\
&=&
\frac{2}{ \#{\tilde V}_n \left(\#{\tilde V}_n-1\right)}
=\frac{1}{\# {\tilde E}_n}.
\label{20}
\end{eqnarray}
By  (\ref{20}), we obtain
$$
P\left(W_n \in {\tilde E}_n\right)=\sum_{\{i_1, i_2\} \in {\tilde E}_n} P\left(W_n=\{i_1,i_2\}\right)=1.
$$
This
implies $P\left(W_n \in \mathcal{E}_n \setminus{\tilde
E}_n\right)=0$, and consequently,
$P\left(W_n=\{i,j\}\right)=0$ for any pair of nodes $\{i,j\} \in \mathcal{E}_n \setminus{\tilde E}_n$.
\end{proof}

\bigskip

From Prop. \ref{prop1} it follows that the attachment rule (ii) with function (\ref{a05})  and parameters $\epsilon=0$, $m=2$ does not
give preferences to a pair of nodes  with relatively large clustering
coefficients. This means an absence of {\it the rich-get-richer} effect
in  the CA model under consideration.

The equality $P\left(W_n=\{i,j\}\right)=0$, $i, j \in V_n \setminus {\tilde V}_n$ means that
nodes from the set $V_n \setminus {\tilde V}_n$ do not take part in forming  the graph $G_{n+1}$ since newly appending nodes do not attach to such nodes. The possibility to decompose
the set $V_n$ into subsets of active nodes ${\tilde V}_n$ and inactive ones $V_n \setminus {\tilde V}_n$  turns to the mind that
the CA model can be used to generate various two-classes
communities. We refer \cite{Bag} for more possible applications of CA model.

\begin{example}\label{Exam2.1}
    Fig. \ref{fig1} illustrates  classes mentioned above of one realization of the CA  random graph $(V_n, E_n)$ with $n=5000$ and  $n=10000$ when the initial graph $G_1$ is a quadrilateral with all diagonals except one:
$$
V_1=\{1,2,3,4\}, \quad E_1=\{\{1,2\}, \{1,3\}, \{1,4\}, \{2,3\}, \{3,4\}\}.
$$
\end{example}
 \begin{figure}[ht!]
\begin{center}
 \includegraphics[width=0.40\textwidth]{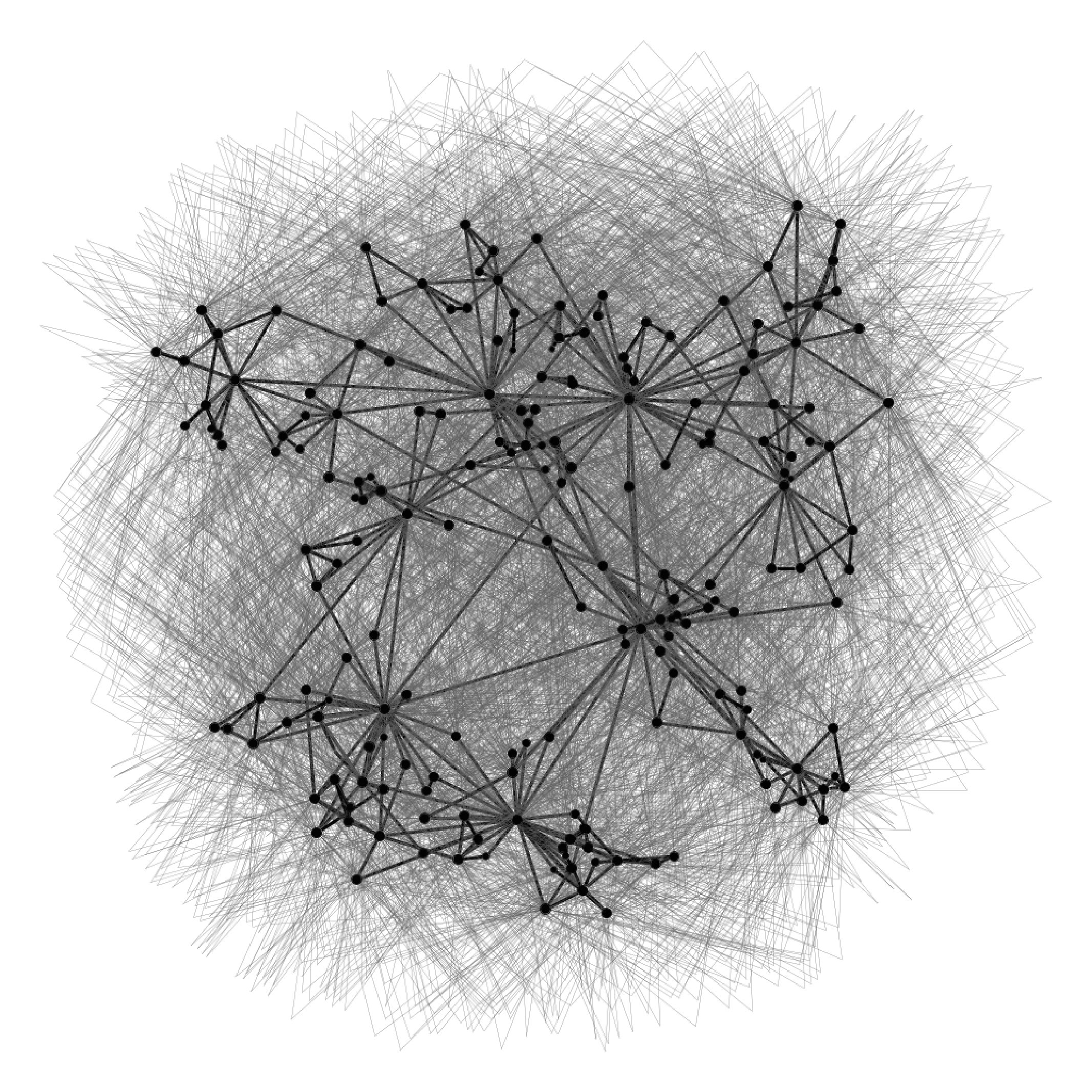} \quad \includegraphics[width=0.40\textwidth]{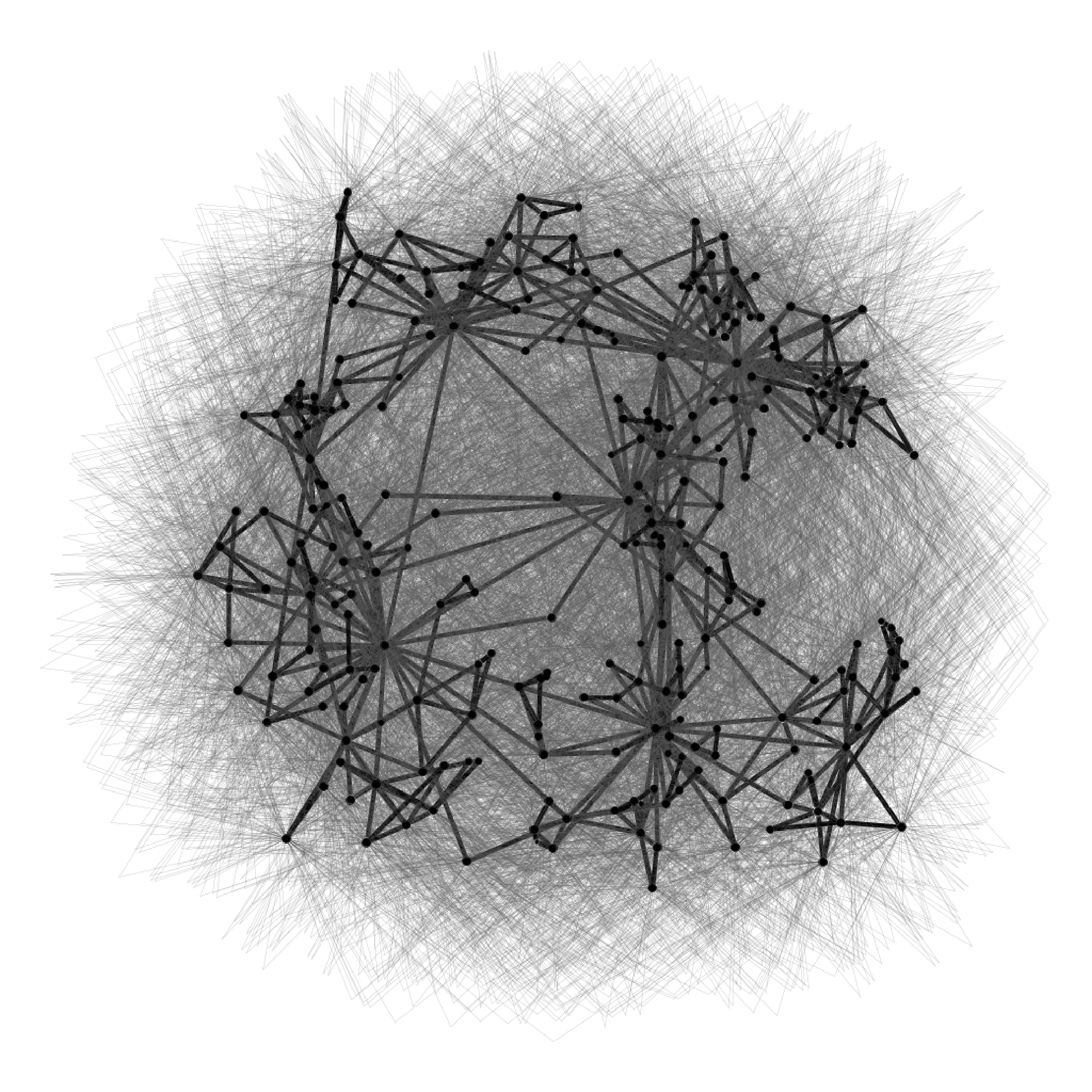}
\end{center}
\caption{The CA random graph  $(V_n, E_n)$ with $n=5000$ (left) and $n=10000$ (right): nodes from ${\tilde V}_n$ are shown in black, edges from ${\tilde E}_n \cap E_n$ are in dark grey and  the rest of graph  is in light grey.} \label{fig1}
\end{figure}

\section{Proofs} \label{sec3}

\subsection{Auxiliary lemmas}
\par
Here and throughout, we assume that set of edges $E_n$, $n \ge 1$ is presented as a set of unordered pairs of nodes.
For any $n \ge 1$ we divide the set ${\tilde E}_n$ into two subsets: ${\tilde E}_n \cap E_n$ and
${\tilde E}_n \setminus \left({\tilde E}_n \cap E_n\right)$. The set ${\tilde E}_n \cap E_n$ contains the unordered pairs of nodes which are connected
by edges
in graph $G_n$.
Put $B_n=\{W_n \in {\tilde E}_n \cap E_n\}$, $n \ge 1$ denotes a random event that a pair of nodes $W_n$ chosen by the uniform sampling  is connected by an edge from $E_n$.

Let $B^c$ denote the opposite event to the random event $B$.
Let $n,k  \in \mathbb{N}$,
and $\ell \in \mathbb{N}\cup \{0\}$ are such that $n\ge 2$,
$1 \le k<n$, $0\le \ell \le n-k$. Let $\left(B_k \cap \dots \cap B_{n-1}\right)_{\ell}$ denote the union of $(n-k)!/(\ell! (n-k-\ell)!)$ non-intersecting random events such that 
$\ell$ events $B_j$ are replaced by
their opposites in the intersection $B_k \cap \dots \cap B_{n-1}$.
If a random event $\left(B_k \cap \dots \cap B_{n-1}\right)_{\ell}$ occurs, then
$\ell$ pairs of nodes among $W_k, \dots, W_{n-1}$ are disconnected, while the rest $n-k-\ell$ pairs are connected.
For example,
if $k=1$, $n=4$, then we have
\begin{eqnarray*}
\ell=0: \quad (B_1\cap B_2 \cap B_3)_{0} &=& B_1\cap B_2 \cap B_3, \\
\ell=1: \quad (B_1\cap B_2 \cap B_3)_{1} &=&(B_1\cap B_2 \cap B_3^c) \cup (B_1\cap B_2^c \cap B_3) \cup (B_1^c\cap B_2 \cap B_3), \\
\ell=2: \quad (B_1\cap B_2 \cap B_3)_{2} &=&(B_1\cap B_2^c \cap B_3^c) \cup (B_1^c\cap B_2 \cap B_3^c) \cup (B_1^c\cap B_2^c \cap B_3), \\
\ell=3: \quad (B_1\cap B_2 \cap B_3)_{3} &=&
    B_1^c\cap B_2^c \cap B_3^c.
\end{eqnarray*}
By Prop. \ref{prop1},
\begin{equation} \label{c00}
P\left(B_n \right)=\sum_{\{i_1, i_2\} \in {\tilde E}_n \cap E_n} \frac{1}{\# {\tilde E}_n}=\frac{\# \left( {\tilde E}_n \cap E_n\right)}{\# {\tilde E}_n}.
\end{equation}
Let $n \ge k-1$ and $k \ge 1$. If the random event
$\left(B_k \cap \dots \cap B_{n-1}\right)_{\ell}$ has occurred, then
\begin{eqnarray*}
  \#\left({\tilde E}_n \cap E_n \right)&=& \# \left( {\tilde E}_k \cap E_k\right)+2(n-\ell-k), \\
  \#{\tilde E}_n &=& \# {\tilde E}_k +(n-\ell-k)\# {\tilde V}_k+(1/2) (n-\ell-k)(n-\ell-k-1).
\end{eqnarray*}
Thus, by (\ref{c00}) we get
\begin{eqnarray*}
P\left(B_n| \left(B_k\cap \dots \cap B_{n-1}\right)_{\ell}\right)&=& p_{n-\ell}\left( G_k\right),
\end{eqnarray*}
where
\begin{equation}\label{c03-n1}
p_n\left( G_k\right)=\frac{\# \left( {\tilde E}_k \cap E_k\right)+2(n-k)}{\# {\tilde E}_k +(n-k)\# {\tilde V}_k+(1/2) (n-k)(n-k-1)}.
\end{equation}

Let us prove several inequalities which will be used in the proof of Theorem \ref{main}.
\begin{lemma} \label{lem0} Let assumptions of Theorem \ref{main} hold.
For any natural numbers  $k \le n$,
\begin{eqnarray} \label{c03-0a}
p_n\left( G_k\right)&>&0, \\
\label{c03-0}
p_n\left( G_k\right) &>& p_{n+1}\left( G_k\right).
\end{eqnarray}
\end{lemma}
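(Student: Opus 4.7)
My plan is to read (\ref{c03-0a}) off from a structural fact about $\tilde V_k$, reduce (\ref{c03-0}) to a purely algebraic inequality by cross-multiplication, and then identify the combinatorial content of that inequality as a simple handshake count on the subgraph $\bigl(\tilde V_k,\tilde E_k\cap E_k\bigr)$.

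For (\ref{c03-0a}), assumption (\ref{a04}) guarantees that $\tilde V_k$ is nonempty. Any $i\in\tilde V_k$ is, by definition, a vertex of at least one triangle of $G_k$; the remaining two vertices of that triangle also lie in $\tilde V_k$, so $\#\tilde V_k\ge 3$ and consequently $\#\tilde E_k\ge 3$ and $\#({\tilde E}_k\cap E_k)\ge 3$. Both the numerator and the denominator in (\ref{c03-n1}) are therefore strictly positive for every $n\ge k$.

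For (\ref{c03-0}), write $t=n-k$, $A=\#({\tilde E}_k\cap E_k)$, $C=\#\tilde V_k$, $B=\#\tilde E_k=C(C-1)/2$, so that
$$p_{k+t}(G_k)=\frac{A+2t}{B+tC+t(t-1)/2}.$$
Setting $f(t)=A+2t$ and $g(t)=B+tC+t(t-1)/2$, the inequality $f(t)g(t+1)>f(t+1)g(t)$ is equivalent, after using $g(t+1)-g(t)=C+t$ and $f(t+1)-f(t)=2$, to
$$f(t)(C+t)>2g(t),$$
and a direct expansion reduces it to
$$AC+(A+1)t+t^2>C(C-1).$$
Thus the whole matter will come down to the base case $t=0$, that is, to the combinatorial inequality $A\ge C$.

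The main (and only nontrivial) step is this last bound. I will establish it by a handshake argument on the subgraph $\bigl(\tilde V_k,\tilde E_k\cap E_k\bigr)$: for every $i\in\tilde V_k$ there exist $j,l\in\tilde V_k$ with $\{i,j\},\{i,l\},\{j,l\}\in E_k$, so each vertex of $\tilde V_k$ has degree at least $2$ in this subgraph, giving $2A\ge 2C$. Combined with the nonnegativity of $(A+1)t+t^2$, this yields $AC+(A+1)t+t^2\ge C^2>C(C-1)$ and hence (\ref{c03-0}). The principal obstacle, then, is recognising that the structural definition of $\tilde V_k$ forces the edge count $A$ to dominate the vertex count $C$; once that is in hand, the rest is algebra.
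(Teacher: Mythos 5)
Your proof is correct, and it reaches the same algebraic reduction as the paper — both arguments boil (\ref{c03-0}) down to the inequality $\#\bigl({\tilde E}_k\cap E_k\bigr)\ge \#{\tilde V}_k$ (the paper's (\ref{c03-1})) — but you establish that key combinatorial fact by a genuinely different and, in my view, cleaner route. The paper proves (\ref{c03-1}) by induction on $k$: it classifies the initial graphs $\bigl({\tilde V}_1,{\tilde E}_1\cap E_1\bigr)$ with the minimal number of edges (a triangle for $\#{\tilde V}_1=3$, a ``book'' of triangles sharing one side for $\#{\tilde V}_1>3$) to get the base case, and then tracks how $\#\bigl({\tilde E}_k\cap E_k\bigr)$ and $\#{\tilde V}_k$ change depending on whether $B_k$ occurs. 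Your handshake argument — every $i\in{\tilde V}_k$ lies in a triangle of $G_k$ whose other two vertices also lie in ${\tilde V}_k$, so every vertex of the induced subgraph $\bigl({\tilde V}_k,{\tilde E}_k\cap E_k\bigr)$ has degree at least $2$, whence $2\#\bigl({\tilde E}_k\cap E_k\bigr)\ge 2\#{\tilde V}_k$ — applies directly to every $k$ and every realization of $G_k$, so it needs no induction and no structural classification of extremal initial graphs. This is a real advantage: the paper's claim that the minimal graph for $\#{\tilde V}_1>3$ has exactly $\#{\tilde V}_1+1$ edges is not accurate for $\#{\tilde V}_1\ge 5$ (e.g., two triangles sharing a vertex, or disjoint triangles, do better), although the conclusion $\#\bigl({\tilde E}_1\cap E_1\bigr)\ge\#{\tilde V}_1$ that it actually needs is still true; your degree-sum bound avoids this issue entirely. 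Your treatment of (\ref{c03-0a}) (noting $\#{\tilde V}_k\ge 3$ and $\#\bigl({\tilde E}_k\cap E_k\bigr)\ge 3$) is a slight strengthening of the paper's remark that assumption (\ref{a04}) makes both sets nonempty, and the cross-multiplication algebra matches the paper's displayed reduction exactly.
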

\begin{proof} Assumption (\ref{a04}) implies ${\tilde E}_k \not = \emptyset$  and ${\tilde E}_k \cap E_k \not = \emptyset$ for any natural $k$. Now (\ref{c03-0a}) follows from (\ref{c03-n1}).

Next, using (\ref{c03-n1}), the inequality
(\ref{c03-0}) boils down to the following
$$
\#{\tilde V}_k\left\{\#\left( {\tilde E}_k \cap E_k\ \right)- \#{\tilde V}_k+1 \right\}+(n-k)\#\left( {\tilde E}_k \cap E_k\ \right)+(n-k)(n-k+1)>0.
$$
Thus, it is enough to show that for any initial graph satisfying (\ref{a04}),
\begin{equation}\label{c03-1}
    \#\left( {\tilde E}_k \cap E_k\ \right) >  \#{\tilde V}_k-1
\end{equation}
holds. Let us use the mathematical induction to prove (\ref{c03-1}). We consider the graph $\left({\tilde V}_1, {\tilde E}_1 \cap E_1\right)$.
A minimum number of edges in ${\tilde E}_1 \cap E_1$ depends on  $\# {\tilde V}_1$. If $\# {\tilde V}_1=3$ holds, then the graph, which has a
minimum number of edges is triangle for which  $\#\left( {\tilde E}_1 \cap E_1\ \right)=3$.
Thus, the inequality (\ref{c03-1})  holds for $k=1$ in this case.
If $\# {\tilde V}_1>3$ holds, then
the graph, which has a minimum number of edges consists of $\# {\tilde V}_1-2$ triangles with the same one side (see, Example \ref{Exam2.1} for the case $\# {\tilde V}_1=4$).
For such graphs it holds $\#\left({\tilde E}_1 \cap E_1\right)=\# {\tilde V}_1+1$. This yields that inequality
(\ref{c03-1}) with $k=1$ and $\# {\tilde V}_1>3$ holds again.

Next, let us make the induction hypothesis that (\ref{c03-1}) holds. We have to show that (\ref{c03-1})
holds  when $k$ is replaced by $k + 1$. If the random event $B_k$ occurs, then
$\#\left( {\tilde E}_{k+1} \cap E_{k+1}\ \right)=\#\left( {\tilde E}_k \cap E_k\ \right)+2$ and
$\#{\tilde V}_{k+1}=\#{\tilde V}_k+1$ follow. By  (\ref{c03-1}) we get
$$
\#\left( {\tilde E}_{k+1} \cap E_{k+1}\ \right)-\#{\tilde V}_{k+1}+1=\left\{\#\left( {\tilde E}_k \cap E_k\ \right)-  \#{\tilde V}_k+1 \right\}+1 > 0.
$$
The case when $B_k$ does not occur can be considered similarly.
\end{proof}



\begin{lemma} \label{lem2} Under assumptions of Theorem \ref{main}, we have
\begin{equation}\label{c06}
    P\left(B_n \right) > P\left(B_n| B_1\cap \dots \cap B_{n-1}\right), \quad n=2,3,\dots
\end{equation}
\end{lemma}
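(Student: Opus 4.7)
The plan is to derive the inequality by decomposing $P(B_n)$ via the law of total probability over the partition of the sample space by the events $(B_1\cap\dots\cap B_{n-1})_{\ell}$, $\ell=0,1,\dots,n-1$, and then exploit the strict monotonicity of $m\mapsto p_m(G_1)$ supplied by Lemma \ref{lem0}. The key observation is that the conditional probability appearing in the statement is exactly the term indexed by $\ell=0$ in this partition.

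First I would recall the identity $P(B_n\mid(B_1\cap\dots\cap B_{n-1})_{\ell})=p_{n-\ell}(G_1)$ established before Lemma \ref{lem0}; in particular, for $\ell=0$, $P(B_n\mid B_1\cap\dots\cap B_{n-1})=p_n(G_1)$. The law of total probability then gives
\[
P(B_n)=\sum_{\ell=0}^{n-1}p_{n-\ell}(G_1)\,P\bigl((B_1\cap\dots\cap B_{n-1})_{\ell}\bigr),
\]
and since the events $(B_1\cap\dots\cap B_{n-1})_{\ell}$ exhaust the sample space, subtracting $p_n(G_1)$ produces
\[
P(B_n)-p_n(G_1)=\sum_{\ell=1}^{n-1}\bigl[p_{n-\ell}(G_1)-p_n(G_1)\bigr]\,P\bigl((B_1\cap\dots\cap B_{n-1})_{\ell}\bigr).
\]

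By inequality (\ref{c03-0}) of Lemma \ref{lem0}, the sequence $m\mapsto p_m(G_1)$ is strictly decreasing, so every bracketed factor with $\ell\ge 1$ is strictly positive. Hence each summand on the right is non-negative, and strict inequality in (\ref{c06}) follows as soon as at least one of the probabilities $P\bigl((B_1\cap\dots\cap B_{n-1})_{\ell}\bigr)$ with $\ell\ge 1$ is positive, equivalently $P\bigl(\bigcup_{j=1}^{n-1}B_j^c\bigr)>0$.

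I expect the main obstacle to be this last positivity step: one must show that with positive probability at least one uniformly sampled pair $W_j$, $1\le j\le n-1$, lands on a non-edge of $G_j$. I would handle it by a counting comparison, using that at each evolution step only $m=2$ edges are added to $E_j$, while the complete graph on $\tilde V_j$ typically gains on the order of $\#\tilde V_j$ new pairs; hence $\#(\tilde E_j\cap E_j)/\#\tilde E_j$ drops strictly below $1$ with positive probability for some $j<n$, giving $P(B_j^c)>0$. The algebraic decomposition is routine once the partition identity is in place; it is this verification that carries the work.
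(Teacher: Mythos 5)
Your proposal follows essentially the same route as the paper's own proof: the identical law-of-total-probability decomposition over the partition $\left(B_1\cap \dots \cap B_{n-1}\right)_{\ell}$, $\ell=0,\dots,n-1$, the identification $P\left(B_n|\left(B_1\cap \dots \cap B_{n-1}\right)_{\ell}\right)=p_{n-\ell}(G_1)$, and the appeal to the monotonicity (\ref{c03-0}) of Lemma \ref{lem0}. The only place you go beyond the paper is in observing that strictness of (\ref{c06}) requires $P\bigl(\bigcup_{j=1}^{n-1}B_j^c\bigr)>0$, a point the paper passes over silently; your counting argument does secure this for every $j\ge 2$ (since $\#{\tilde V}_j\ge 3>m=2$ forces $\#\left({\tilde E}_{j+1}\cap E_{j+1}\right)<\#{\tilde E}_{j+1}$ almost surely), hence for all $n\ge 3$. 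The residual case $n=2$ can genuinely fail: if $G_1$ is a complete graph (e.g.\ a triangle), then ${\tilde E}_1\cap E_1={\tilde E}_1$, $P(B_1)=1$, and (\ref{c06}) holds only with equality --- so the obstacle you flagged is real, but it is a defect of the lemma's statement shared by the paper's proof rather than a gap introduced by your argument.
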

\begin{proof} By the law of total probability,
\begin{eqnarray*}
\!\!\!\!\!P(B_n)&=&\sum_{\ell=0}^{n-1} P\left(B_n|\left(B_1\cap \dots \cap B_{n-1}\right)_{\ell}\right)P\left(\left(B_1\cap \dots \cap B_{n-1}\right)_{\ell}\right).
\end{eqnarray*}
From Lemma \ref{lem0} it follows that for any $1 \le \ell \le n-1$,
$$
P\left(B_n| B_1\cap B_2 \cap\dots \cap B_{n-1}\right) <
P\left(B_n| \left(B_1\cap B_2 \cap\dots \cap B_{n-1}\right)_{\ell} \right).
$$
This implies
$
P(B_n)>P\left(B_n|B_1\cap \dots \cap B_{n-1}\right)\sum_{\ell=0}^{n-1} P\left(\left(B_1\cap \dots \cap B_{n-1}\right)_{\ell}\right).
$
The statement (\ref{c06}) follows since
the last sum  is equal to $1$.
\end{proof}
The proof of the theorem  \ref{main} is a verification of the conditions of the Borel-Cantelli lemma below.

\begin{lemma}{(\cite{Fri}, p.79)}
 \label{borel}
 Let random events $C_1, C_2, \dots$ be defined on one probability space
that satisfy the correlation condition:
for any natural $u$ and $v$, such that $u\not=v$,
random events $C_{u}$ and $C_{v}$ are negative correlated or uncorrelated.
If $\sum_{n=1}^{\infty} P(C_n)=\infty$, then $P\{C_n~\mbox{i.m.}\}=1.$\footnote{Through $\{C_n~\mbox{i.m.}\}$ denotes an event consisting in the fact that infinitely many events will occur from $C_1, C_2,...$ \cite{Shi}.}
\end{lemma}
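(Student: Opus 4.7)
The plan is to apply the classical second-moment method: introduce the partial sums $S_n=\sum_{k=1}^n \Ii_{C_k}$, show that $\mathrm{Var}(S_n)$ is controlled by $E[S_n]$ thanks to the correlation hypothesis, deduce convergence in probability of $S_n/E[S_n]$ to $1$ via Chebyshev, and finally upgrade to almost sure divergence of $S_n$ via a subsequence argument exploiting monotonicity. The event $\{C_n~\mbox{i.m.}\}$ coincides with $\{S_\infty=\infty\}$, so proving $S_n\to\infty$ a.s. is exactly what is needed.

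First I would set $\mu_n = E[S_n] = \sum_{k=1}^n P(C_k)$, which tends to $\infty$ by hypothesis. Using $\mathrm{Var}(\Ii_{C_k}) = P(C_k)(1-P(C_k)) \le P(C_k)$ and the assumption that for $u\ne v$ the indicators are negatively correlated or uncorrelated (so $\mathrm{Cov}(\Ii_{C_u},\Ii_{C_v})\le 0$), I would expand
\begin{equation*}
\mathrm{Var}(S_n) = \sum_{k=1}^n \mathrm{Var}(\Ii_{C_k}) + 2\sum_{1\le u<v\le n} \mathrm{Cov}(\Ii_{C_u},\Ii_{C_v}) \le \mu_n.
\end{equation*}
Chebyshev's inequality then yields, for any $\varepsilon\in(0,1)$,
\begin{equation*}
P\bigl(|S_n-\mu_n|>\varepsilon \mu_n\bigr) \le \frac{\mathrm{Var}(S_n)}{\varepsilon^2 \mu_n^2} \le \frac{1}{\varepsilon^2 \mu_n} \longrightarrow 0,
\end{equation*}
so $S_n/\mu_n\to 1$ in probability, giving $S_n\to\infty$ in probability.

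The main obstacle is to upgrade this to almost sure divergence, since Chebyshev alone is not summable. The fix is to choose a subsequence $n_j$ along which the bounds are summable. Concretely, because $\mu_n$ is non-decreasing and diverges, I can pick $n_j$ so that $\mu_{n_j}\ge j^2$. Then $\sum_j P(|S_{n_j}-\mu_{n_j}|>\mu_{n_j}/2) \le \sum_j 4/(j^2)<\infty$, and the classical (first) Borel–Cantelli lemma gives $S_{n_j}\ge \mu_{n_j}/2\to\infty$ almost surely. Since $n\mapsto S_n$ is non-decreasing, for arbitrary $n$ one can sandwich $S_n$ between $S_{n_j}$ and $S_{n_{j+1}}$ with $n_j\le n<n_{j+1}$, which forces $S_n\to\infty$ a.s.

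Finally, I would observe $\{S_\infty = \infty\} = \{C_n~\mbox{i.m.}\}$: if only finitely many $C_n$ occur then $S_n$ stabilizes, and conversely if infinitely many occur then $S_n\to\infty$. Hence $P(\{C_n~\mbox{i.m.}\}) = 1$, completing the proof. The only delicate point beyond bookkeeping is the variance bound, which hinges directly on the non-positivity of pairwise covariances; once that is in place, the rest is a standard Chebyshev-plus-subsequence routine.
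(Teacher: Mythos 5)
Your proof is correct. Note that the paper does not prove this lemma at all --- it is quoted as a known result from Fristedt and Gray, so there is no in-paper argument to compare against. What you give is the standard second-moment proof of this generalized (second) Borel--Cantelli lemma: the hypothesis that distinct events are negatively correlated or uncorrelated gives $\mathrm{Cov}(\Ii_{C_u},\Ii_{C_v})\le 0$, hence $\mathrm{Var}(S_n)\le \mu_n$, Chebyshev along a subsequence with $\mu_{n_j}\ge j^2$ combined with the first Borel--Cantelli lemma gives $S_{n_j}\to\infty$ a.s., and monotonicity of $S_n$ together with the identity $\{C_n~\mbox{i.m.}\}=\{S_\infty=\infty\}$ finishes the argument. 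All steps are sound; this is essentially the textbook proof the citation points to.
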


\bigskip

\begin{lemma} \label{lem3} Let assumptions of Theorem \ref{main} hold. Then random events $B_1, B_2, \dots$
satisfy the correlation condition, i.e.,
\begin{equation} \label{lem3-0a}
P\left( B_{u} \cap B_{v} \right) \le P\left( B_{u}\right) P\left( B_{v}\right).
\end{equation}
\end{lemma}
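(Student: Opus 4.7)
The plan is to exploit the Markov structure of the state $(\alpha_n, V_n) := (\#(\tilde{E}_n \cap E_n), \#\tilde{V}_n)$. Under the assumptions of Theorem~\ref{main}, this pair jumps to $(\alpha+2, V+1)$ on $B_n$ and stays put on $B_n^c$, and it fully determines future transition probabilities because $R_n := \#(\tilde{E}_n\cap E_n)/\#\tilde{E}_n$ equals $P(B_n \mid \mathcal{F}_n)$ by Proposition~\ref{prop1}, where $\mathcal{F}_n$ is the history up to $G_n$.

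First I would establish a multi-step monotonicity. Let $H_k(\alpha, V)$ denote the probability of a success at the $k$-th future step, starting from state $(\alpha, V)$, so that $H_0(\alpha, V) = R := \alpha/\binom{V}{2}$ and
\[
H_{k+1}(\alpha, V) = R\cdot H_k(\alpha+2, V+1) + (1-R)\cdot H_k(\alpha, V).
\]
By induction on $k$ one obtains $H_k(\alpha+2, V+1) \le H_k(\alpha, V)$ for every valid state: the base case $k=0$ is (\ref{c03-0}) from Lemma~\ref{lem0}, and the inductive step rests on the algebraic identity
\[
H_{k+1}(\alpha, V) - H_{k+1}(\alpha+2, V+1) = (A - B)(1-R) + R'(B - C),
\]
with $A = H_k(\alpha, V)$, $B = H_k(\alpha+2, V+1)$, $C = H_k(\alpha+4, V+2)$ and $R > R'$ the current-step probabilities at states $(\alpha,V)$ and $(\alpha+2,V+1)$; both terms are non-negative under the hypothesis $A \ge B \ge C$ and $R \le 1$.

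Next I would reformulate the target. Writing $N_k := \sum_{i\le k}\mathbf{1}_{B_i}$ and denoting by $r_j$ the value of $R$ reached from $G_1$ after exactly $j$ successes (a strictly decreasing sequence by (\ref{c03-0})), Bayes and the Markov property give that $P(B_u\cap B_v)\le P(B_u)P(B_v)$ is equivalent to $E[r_{N_{v-1}}\mid B_u] \le E[r_{N_{v-1}}\mid B_u^c]$, and hence, since $r_j$ is decreasing, to the stochastic dominance $N_{v-1}\mid B_u \stackrel{\mathrm{st}}{\ge} N_{v-1}\mid B_u^c$.

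The main obstacle is establishing this stochastic dominance. The conditional inequality $P(B_u\cap B_v\mid \mathcal{F}_u)\le P(B_u\mid \mathcal{F}_u)\,P(B_v\mid\mathcal{F}_u)$, immediate from the $H_k$-monotonicity, does not average to the unconditional form: $R_u$ and $P(B_v\mid \mathcal{F}_u)$ are positively correlated, both being decreasing functions of $N_{u-1}$. Likewise, the most natural pathwise coupling — common uniforms on the forward steps $u+1,\dots,v-1$ together with a monotone-likelihood-ratio coupling of the two posteriors of $N_{u-1}$ (whose ratio $r_j/(1-r_j)$ is decreasing in $j$) — falls short, because the gap between the two posteriors can exceed the $+1$ contributed by $\mathbf{1}_{B_u}=1$, and the forward coupling is only order-preserving. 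What rescues the argument is that $\pi_j := P(N_{u-1}=j)$ is not arbitrary but is itself generated by the birth chain with rates $r_j$; I would close the dominance by induction on $v-u$, the base $v = u+1$ reducing to a direct computation that uses only the monotonicity of $r_j$, and the inductive step combining the $H_k$-monotonicity of Step~2 with the recurrence that defines $\pi$ at the intermediate step.
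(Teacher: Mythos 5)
Your setup is sound and close in spirit to the paper's: the state $\left(\#\left({\tilde E}_n\cap E_n\right),\#{\tilde V}_n\right)$ is indeed Markov, your recursion for $H_k$ and the identity $H_{k+1}(\alpha,V)-H_{k+1}(\alpha+2,V+1)=(1-R)(A-B)+R'(B-C)$ are correct, and the resulting monotonicity is exactly the content of (\ref{c03-0}) together with the bookkeeping $p_{n-\ell}(G_k)$ in (\ref{c03-n1}). You also put your finger on the real difficulty: the conditional inequality $P(B_v\mid B_u,\mathcal{F}_u)\le P(B_v\mid\mathcal{F}_u)$ does not average to (\ref{lem3-0a}), because $P(B_u\mid\mathcal{F}_u)$ and $P(B_v\mid\mathcal{F}_u)$ are both decreasing in $N_{u-1}$ and hence positively correlated.

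However, the step meant to overcome this obstacle is not a proof. The stochastic dominance $N_{v-1}\mid B_u\stackrel{\mathrm{st}}{\ge}N_{v-1}\mid B_u^c$ is only asserted, and the base case $v=u+1$ does \emph{not} reduce to ``a direct computation that uses only the monotonicity of $r_j$'': writing $\pi_j=P(N_{u-1}=j)$, the needed inequality $E\left[r_{N_{u-1}+1}\mid B_u\right]\le E\left[r_{N_{u-1}}\mid B_u^c\right]$, after the crude bound $r_{j+1}\le r_j$, would require $\sum_j\pi_jr_j^2\le\left(\sum_j\pi_jr_j\right)^2$, which is the reverse of Jensen's inequality and false in general; so the decay $r_j-r_{j+1}$ must be played off quantitatively against the spread of $\pi$, and nothing in your proposal does this. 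Indeed, a prior supported on two non-adjacent values of $j$ already violates the claimed dominance, so the statement genuinely needs unproved structural properties of the law of $N_{u-1}$, not just monotonicity of $r_j$; the inductive step, where all of this must be resolved, is a single unexplained clause. For comparison, the paper never passes through a dominance statement: it expands $P(B_{n+k})$ and $P(B_{n+k}\mid B_n)$ over the exact success/failure patterns $\left(B_n\cap\dots\cap B_{n+k-1}\right)_{\ell}$, cancels the common part $J$ in (\ref{lem3-00})--(\ref{lem3-01}), and exhibits the difference as a sum of manifestly non-negative terms $\left\{p_{n+k-\ell-1}(G_n)-p_{n+k-\ell}(G_n)\right\}\left\{1-p_{n-\ell}(G_n)\right\}P\left(\left(B_{n+1}\cap\dots\cap B_{n+k-1}\right)_{\ell}\right)$. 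As it stands, your argument leaves the crux of the lemma open.
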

\begin{proof}
 If $\min\{u,v\}=1$, then the supposition  (\ref{a04}) leads to $0<\#\left( {\tilde E}_1 \cap E_1\right)\le \#\left( {\tilde E}_1\right)$,
see, the proof of lemma \ref{lem0}. Thus from (\ref{c00}) follows that $P(B_1)>0$. If $\min\{u,v\}>1$, then combining (\ref{c03-0a}) and (\ref{c06}), we obtain $P\left( B_{\min\{u,v\}}\right)> p_{\min\{u,v\}}(G_1)>0$. It is proved that  $P\left( B_{\min\{u,v\}}\right)>0$ for any natural numbers $u$ and $v$.
Appplying definition of total probability one can rewrite the correlation condition as follows:
\begin{equation} \label{lem3-0a11}
P\left( B_{\max\{u,v\}}\right)-P\left( B_{\max\{u,v\}}| B_{\min\{u,v\}}\right) \ge 0.
\end{equation}
It is enough to derive that the inequality (\ref{lem3-0a11}) is fulfilled for
$\max\{u,v\}=n+k$  and $\min\{u,v\}=n$, where
$n$ and $k$  are any natural numbers.

If $k=1$, then applying the formula of total probability with (\ref{c03-0}), we get
$$
P(B_{n+1})-P(B_{n+1}|B_n)=\left(1-p_1(G_n)\right)\left(p_1(G_n)-p_2(G_n) \right) \ge 0.
$$
Here, the equality is valid for
$\#\left( {\tilde E}_n \cap E_n\right)= \#\left( {\tilde E}_n\right)$.

Let now $k \ge 2$. We state that
\begin{eqnarray}
  P\left( B_{n+k}\right) &=& J
+ \sum_{\ell=0}^{k-1} p_{n+k-\ell-1}(G_n)
P\left(B_n^c \cap \left(B_{n+1}\cap \dots \cap B_{n+k-1}\right)_{\ell} \right), \label{lem3-00} \\
P\left( B_{n+k}|B_n\right) &=&   J+ \sum_{\ell=0}^{k-1} p_{n+k-\ell}(G_n)
P\left(B_n^c \cap \left(B_{n+1}\cap \dots \cap B_{n+k-1}\right)_{\ell} \right), \label{lem3-01}
\end{eqnarray}
where
$$
J=\sum_{\ell=0}^{k-1} p_{n+k-\ell}(G_n)
P\left(B_n \cap \left(B_{n+1}\cap \dots \cap B_{n+k-1}\right)_{\ell} \right).
$$
From (\ref{lem3-00}), (\ref{lem3-01}) it follows immediately that the difference  $P\left( B_{n+k}\right)-P\left( B_{n+k}|B_n\right)$ is equal to
\begin{eqnarray*}
     && \sum_{\ell=0}^{k-1} \left\{p_{n+k-\ell-1}(G_n)-p_{n+k-\ell}(G_n)\right\}
P\left(B_n^c \cap \left(B_{n+1}\cap \dots \cap B_{n+k-1}\right)_{\ell} \right) \\
  &&=\sum_{\ell=0}^{k-1} \left\{p_{n+k-\ell-1}(G_n)-p_{n+k-\ell}(G_n)\right\}
\left\{1-p_{n-\ell}(G_n) \right\} P
\left(\left(B_{n+1}\cap \dots \cap B_{n+k-1}\right)_{\ell} \right).
\end{eqnarray*}
By (\ref{c03-0a}), (\ref{c03-0}) it follows that the latter sum is positive.

Let us prove (\ref{lem3-00}).
Applying the formula of total probability we obtain
\begin{eqnarray*}
  P\left(B_{n+k}\right) &=& P\left(B_{n+k} | \left(B_n\cap \dots \cap B_{n+k-1}\right)_{0} \right)
P\left(\left(B_n\cap \dots \cap B_{n+k-1}\right)_{0} \right) \\
&&+ \sum_{\ell=1}^{k-1} P\left(B_{n+k} | \left(B_n\cap \dots \cap B_{n+k-1}\right)_{\ell} \right)
P\left(\left(B_n\cap \dots \cap B_{n+k-1}\right)_{\ell} \right)\\
&&+ P\left(B_{n+k} | \left(B_n\cap \dots \cap B_{n+k-1}\right)_{k} \right)
P\left(\left(B_n\cap \dots \cap B_{n+k-1}\right)_{k} \right)
\end{eqnarray*}
Using the identity
\begin{eqnarray*}
\left(B_n\cap \dots \cap B_{n+k-1}\right)_{\ell}
&=&\left\{ B_n \cap \left(B_{n+1}\cap \dots \cap B_{n+k-1}\right)_{\ell}\right\} \cup
\\
&&\cup
\left\{ B_n^c \cap \left(B_{n+1}\cap \dots \cap B_{n+k-1}\right)_{\ell-1}\right\},
\end{eqnarray*}
we have
\begin{eqnarray*}
  P\left(B_{n+k}\right) &=& \bigg\{P\left(B_{n+k} | B_n\cap \dots \cap B_{n+k-1} \right)
P\left(B_n\cap \dots \cap B_{n+k-1} \right) \\
&&+ \sum_{\ell=1}^{k-1} P\left(B_{n+k} | \left(B_n \cap \dots \cap B_{n+k-1}\right)_{\ell} \right)
P\left(B_n \cap \left(B_{n+1}\dots \cap B_{n+k-1}\right)_{\ell}  \right) \bigg\} \\
&&+\sum_{\ell=1}^{k-1} P\left(B_{n+k} | \left(B_n \cap \dots \cap B_{n+k-1}\right)_{\ell} \right)
P\left(B_n^c \cap \left(B_{n+1} \cap \dots \cap B_{n+k-1}\right)_{\ell-1}  \right)  \\
&&+ P\left(B_{n+k} | B_n^c \cap \dots \cap B_{n+k-1}^c \right)
P\left(B_n^c\cap \dots \cap B_{n+k-1}^c \right).
\end{eqnarray*}
Applying (\ref{c03-n1}), we get that the sum in curly brackets is equal to $J$. By replacing the summation variable with  $s=\ell-1$, we obtain
\begin{eqnarray*}
&&\sum_{\ell=1}^{k-1} P\left(B_{n+k} | \left(B_n \cap\dots \cap B_{n+k-1}\right)_{\ell} \right)
P\left(B_n^c \cap \left(B_{n+1}\dots \cap B_{n+k-1}\right)_{\ell-1}  \right) \\
&&=\sum_{s=0}^{k-2} P\left(B_{n+k} | \left(B_n\cap \dots \cap B_{n+k-1}\right)_{s+1} \right)
P\left(B_n^c \cap \left(B_{n+1}\dots \cap B_{n+k-1}\right)_{s}  \right) \\
&&=\sum_{s=0}^{k-1} P\left(B_{n+k} | \left(B_n\cap \dots \cap B_{n+k-1}\right)_{s+1} \right)
P\left(B_n^c \cap \left(B_{n+1}\dots \cap B_{n+k-1}\right)_{s}  \right) \\
&&-P\left(B_{n+k} | \left(B_n\cap \dots \cap B_{n+k-1}\right)_{k} \right)
P\left(B_n^c \cap \left(B_{n+1}\dots \cap B_{n+k-1}\right)_{k-1}  \right).
\end{eqnarray*}
Using (\ref{c03-n1}) once more, we will get the proof (\ref{lem3-00}).

The identity (\ref{lem3-01}) can be checked in a similar way. To this end, it is enough to apply
the formula of total probability and the identity
\begin{eqnarray*}
\left(B_{n+1}\cap \dots \cap B_{n+k-1}\right)_{\ell} &=&
\left\{ B_n \cap \left(B_{n+1}\cap \dots \cap B_{n+k-1}\right)_{\ell}\right\} \cup
\\
&&
\cup \left\{ B_n^c \cap \left(B_{n+1}\cap \dots \cap B_{n+k-1}\right)_{\ell}\right\}.
\end{eqnarray*}
\end{proof}

\bigskip

\subsection{Proof of Theorem \ref{main}}
\begin{proof}
The difference of the total number of triangles in graphs $G_n$ and  $G_1$ as well as the difference of the number of nodes involved in triangles  can be expressed through indicators of random events $B_j$ by
\begin{eqnarray}
\label{c01}
  \Delta_n-\Delta_1 &=& \sum_{j=1}^{n-1} I\left\{ B_j\right\},\\
  \label{c02}
\#{\tilde V}_n -\#{\tilde V}_1&=& \sum_{j=1}^{n-1} I\left\{ B_j\right\}, \quad n \ge 2,
\end{eqnarray}
where $I\left\{ \cdot \right\}$ denotes the indicator of the event. By (\ref{c01}), (\ref{c02}) it follows that the
counting of the total number of triangles $\Delta_n$ is a linear function of the cardinality of the set ${\tilde V}_n$:
\begin{equation} \label{c02-1}
\Delta_n= \#{\tilde V}_n +\left(\Delta_1-\#{\tilde V}_1\right), \quad n \ge 2.
\end{equation}
In view of (\ref{c01}), the statement (\ref{a06}) follows from
$$
\sum_{j=1}^{n-1} I\left\{ B(j)\right\} \to \infty \quad {\rm a.s.} \qquad \mbox{as}\qquad\ n\to \infty,
$$
that equivalently implies that
\begin{eqnarray*} 
&& P\left(B_n \ {\rm occurs 
\ infinitely \ many \ times} \ n\right)=1.
\end{eqnarray*}
Let us prove that the sequence  $B_1, B_2, \dots$
satisfies the conditions of Borell-Cantelli lemma (see, lemma \ref{borel}). One of them is verified in the lemma \ref{lem3}. It is enough to prove that
\begin{equation}
    \label{c10}
\sum_{n=1}^\infty P\left(B_n\right)= \infty.
\end{equation}
Due to lemmas  \ref{lem0}, \ref{lem2},
$$
\sum_{n=1}^\infty P\left(B_n\right) >  \sum_{n=3}^\infty p_n(G_1).
$$
Note that  by (\ref{c03-n1}), it follows
\begin{equation}
    \label{c11-a}
p_n(G_1) > \frac{\#\left({\tilde E}_1 \cap E_1 \right)+2(n-1)}{n \# {\tilde E}_1+(1/2)(n-1)(n-2)}.
\end{equation}
Using inequality  $\#\left({\tilde E}_1 \cap E_1 \right) \le \#{\tilde E}_1$, one can prove that the right-hand side of   (\ref{c11-a}) exceeds
$\#\left({\tilde E}_1 \cap E_1 \right)/\left(\left( 3 \# {\tilde E}_1 \right) (n-2)\right)$ for $n\ge 3$. Hence, it holds
$$
\sum_{n=1}^\infty P\left(B_n\right) >  \frac{\#\left({\tilde E}_1 \cap E_1 \right)}{ 3 \# {\tilde E}_1} \lim_{N \to \infty} H_N,
$$
where $H_N=\sum_{n=1}^{N} 1/n$.
The relation (\ref{c10}) follows by applying
the inequality $H_N>\ln(N)$ and the relation $\lim_{N\to \infty}\ln(N)=\infty$.
\end{proof}

It remains to prove corollaries  \ref{cor0}, \ref{cor}. The statement of the corollary \ref{cor0} follows from (\ref{a06}) and (\ref{c02-1}).

To prove the corollary \ref{cor}, we note that from
(\ref{c01}) the inequality follows
$$
E\left(\Delta_n\right)-\Delta_1=\sum_{j=1}^{n-1} P(B_j) > \sum_{j=3}^{n+1} p_{j}(G_1), \quad n \ge 2.
$$
It follows from the proof of the relation (\ref{c10})  that the last sum is greater than
$\left( \#\left({\tilde E}_1 \cap E_1 \right)/\left( 3 \# {\tilde E}_1 \right)\right) \ln(n-1)$. This completes the proof of the inequality  (\ref{a07}).

\section{Monte-Carlo simulation} \label{sec4}

Let us consider modeling the sequence $\Delta_n$, $n\ge 1$.
To investigate the growth of the sample mean $\Delta_n$
modeling a sequence of graphs $G_n$, $n\ge 1$ is
not required. It is enough to simulate a sequence of sets ${\tilde E}_n$, $n\ge 1$.

Let us interpret pairs of nodes from ${\tilde E}_n$ in terms of  a generalized  P\'{o}lya-Eggenberger urn model. We use the fact that
for each $n\ge 1$, the set
${\tilde E}_n$ is divided into two subsets: ${\tilde E}_n \cap E_n$ and ${\tilde E}_n \setminus \left( {\tilde E}_n \cap E_n\right)$, see section \ref{sec4}.
Each pair of nodes from ${\tilde E}_n \cap E_n$ is interpreted as a white ball, while a pair of nodes belonging to the set ${\tilde E}_n \setminus \left( {\tilde E}_n \cap E_n\right)$
is interpreted as a black ball. Hence, an urn (or in other words, the set ${\tilde E}_n$) contains $\# \left({\tilde E}_n \cap E_n\right)$ white balls and
$\# \left({\tilde E}_n \setminus \left( {\tilde E}_n \cap E_n\right)\right)$ black ones.
\\
At every time step $n\ge 1$, we draw a ball  uniformly (with return) at random from the urn (see, proposition \ref{prop1}).  The color of the ball is inspected and  the urn is replenished according
to the following rule. If a white ball is drawn, then we put into the urn two white balls and $\left(1/2)\left( 1+\sqrt{1+8\# {\tilde E}_n }\right)-2\right)$ black balls.
 We recall that cardinalities $\# {\tilde V}_n$ and $\# {\tilde E}_n$
are related as $\# {\tilde V}_n\left(\# {\tilde V}_n-1\right)=2 \# {\tilde E}_n$ (see, (\ref{20})), and that is why $\# {\tilde V}_n+1$ ball is added to the urn.  If we have chosen a black ball, the content of the urn is not changed.

Note that the number of added black and/or white balls is fixed in classical P\'{o}lya-Eggenberger urn model (see, e.g., \cite{Fri}, pg. 437)
while the
number of added black balls depends on $\# {\tilde E}_n$ in our case. For more generalizations of the P\'{o}lya-Eggenberger urn model we refer to
\cite{Chen} and references therein.

Keeping in mind that  pairs of nodes from ${\tilde E}_n$ are uniformly distributed (see Prop. \ref{prop1}), we provide quite simple algorithm
to simulate the sequence $\Delta_n$, $n\ge 1$.

\begin{algorithm}\label{alg:1}
\ 

\begin{enumlist}[.]
  \item The initial  step. Using an initial graph $G_1$, calculate
    $\Delta_1$, $\#{\tilde V}_1$,  $\# \left({\tilde E}_1 \cap E_1\right)$ and
$\#\left({\tilde E}_1 \setminus \left( {\tilde E}_1 \cap E_1\right)\right)$.
   \item  The evolutionary step. For any $n \ge 1$,  simulate a variate of random variable $\xi_n$, which has the uniform discrete distribution at the interval of natural numbers $[1, \dots, \# {\tilde E}_n ]$.
If $\xi_n>\# \left({\tilde E}_n \cap E_n\right)$, then
\begin{eqnarray*}
  \# \left({\tilde E}_{n+1} \cap E_{n+1}\right)&=&\# \left({\tilde E}_n \cap E_n\right), \\
\# \left({\tilde E}_{n+1} \setminus \left( {\tilde E}_{n+1} \cap E_{n+1}\right)\right)&=&
\# \left({\tilde E}_{n} \setminus \left( {\tilde E}_{n} \cap E_{n}\right)\right).
\end{eqnarray*}
If $\xi_n \le \# \left({\tilde E}_n \cap E_n\right)$, then
\begin{eqnarray*}
  \# \left({\tilde E}_{n+1} \cap E_{n+1}\right)&=&\# \left({\tilde E}_n \cap E_n\right)+2, \\
\# \left({\tilde E}_{n+1} \setminus \left( {\tilde E}_{n+1} \cap E_{n+1}\right)\right)&=&\# {\tilde E}_{n+1} - \# \left({\tilde E}_{n+1} \cap E_{n+1}\right),
\end{eqnarray*}
where
$$
\# {\tilde E}_{n+1} :=\# \left({\tilde E}_n \cap E_n\right)+\# \left({\tilde E}_{n} \setminus \left( {\tilde E}_{n} \cap E_{n}\right)\right)+
(1/2)\left(1+\sqrt{1+8\# {\tilde E}_{n+1}}\right).
$$
In both cases, we get
$$
\Delta_{n+1}=\Delta_1-\# {\tilde V}_1 +(1/2)\left(1+\sqrt{1+8\# {\tilde E}_{n+1}}\right).
$$
\end{enumlist}
\end{algorithm}
Let us recall that we need to store only four quantities during the realization of the proposed algorithm: $\Delta_1$, $\#{\tilde V}_1$, $\# \left({\tilde E}_n \cap E_n\right)$ and $\# \left({\tilde E}_{n} \setminus \left( {\tilde E}_{n} \cap E_{n}\right)\right)$.

For each  initial graph $G_1$  $100$ independent sequences $\Delta_1^{(j)}, \dots, \Delta_N^{(j)}$ of the length $N=10^6$ are  simulated.
The upper index $(j)$
denotes the number of sequence.
\\
The following complete graphs are selected as the initial ones: 1) triangle $G_{1,1}$; 2) complete graph $G_{1,2}$ with the number of vertices $17$; 3) complete graph $G_{1,3}$ with the number of vertices $51$.


  The sequence of sample means  of triangle counts  $\Delta_{n}^{(j)}$, $1\le n \le N$ is calculted by formula
$$
{\bar \Delta}_{n}(G_{1, \ell}) = \frac{1}{100} \sum_{j=1}^{100} \Delta_{n}^{(j)}, \quad \ell\in\{1,2,3\}
$$
for each initial graph $G_{1, \ell}$.
The plots $\left\{\left(n, {\bar \Delta}_{n}(G_{1, \ell})- \Delta_{1}(G_{1, \ell})\right), \ 1\le n \le N\right\}$, where $\ell\in\{1,2,3\}$ and $N=10^6$ are shown in Fig.\ref{fig2}.

\begin{figure}[ht!]
\begin{center}
\includegraphics[width=0.70\textwidth]{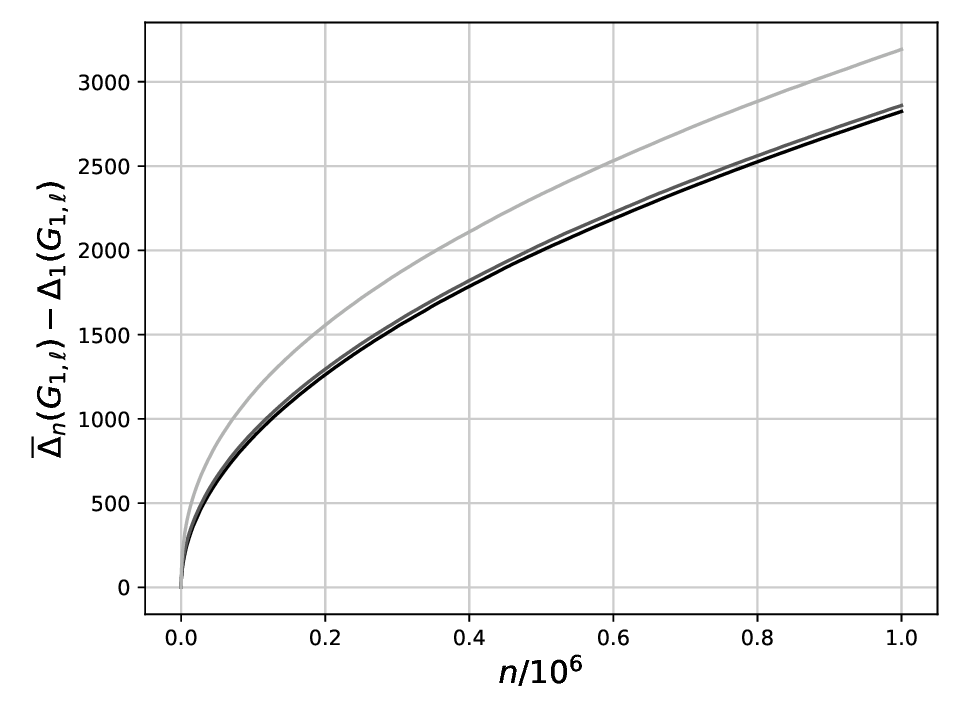}
\end{center}
\caption{Plots $\left\{\left(n, {\bar \Delta}_{n}(G_{1, \ell})-\Delta_{1}(G_{1, \ell})\right), \ 0\leq n \leq 10^6\right\}$ where plots with $\ell\in\{1,2,3\}$ are shown in black, grey and light grey collors, respectively.}\label{fig2}
\end{figure}

The least squares method is used to fit the discrete data
$$
\left\{ \left(n, {\bar \Delta}_{n}(G_{1, \ell})-\Delta_{1}(G_{1, \ell})\right), \  2 \cdot 10^5 \le n \le N\right\},
$$
$$
\left\{ \left(n, {\bar \Delta}_{n}(G_{1, \ell})-\Delta_{1}(G_{1, \ell})\right) , \  5 \cdot 10^5 \le n \le N\right\}
$$
by the function
$\varphi(n)=c_1+c_2 n^{c_3}$. Estimates of $c_1$, $c_2$ and $c_3$ are presented in Table \ref{tab1}.

\begin{table} [ht!]
\begin{center}
\begin{tabular}{|c|c|c|c|}
 \hline
 \multicolumn{4}{|c|}{The range of $n$: $[2 \cdot 10^5, 10^6]$} \\
 \hline
 Initial graph & ${\hat c}_1$ & ${\hat c}_2$ & ${\hat c}_3$ \\
 \hline
 $G_{0,1}$ & 1  & 2.827 & 0.499 \\
 $G_{0,2}$ & 680  & 3.179 & 0.492 \\
 $G_{0,3}$ & 20825  & 6.564 & 0.447 \\
\hline
\end{tabular}
\quad
\begin{tabular}{|c|c|c|c|}
 \hline
 \multicolumn{4}{|c|}{The range of $n$: $[5 \cdot 10^5, 10^6]$} \\
 \hline
 Initial graph & ${\hat c}_1$ & ${\hat c}_2$ & ${\hat c}_3$ \\
 \hline
 $G_{0,1}$ & 1  & 2.831 & 0.499 \\
 $G_{0,2}$ & 680  & 3.234 & 0.491 \\
 $G_{0,3}$ & 20825  & 6.133 & 0.452 \\
 \hline
\end{tabular}
\end{center}
\caption{ Estimates of $c_1$, $c_2$ and $c_3$.}\label{tab1}
\end{table}
The findings of our simulation are the following.
\begin{enumerate}
  \item Simulation results shown in Fig. \ref{fig2} are approximately similar to the theoretical ones. Indeed, $E\Delta_n$ and empirical mean ${\bar \Delta}_n$
  grow together with $n$, see corollary \ref{cor} and fig. \ref{fig2}.
  \item
  ${\bar \Delta}_n$ grows as $C \sqrt{n}$, where $2 \cdot 10^5 \le n \le 10^6$, see Tab. \ref{tab1}.
  This simulation result does not contradict to
  Corollary \ref{cor}. It should be noted that
  from this simulation result it does not follow that ${\bar \Delta}_n$ has the same (or close) growth rate as $n > 10^6$.
  \item When modeling the number of triangles $\Delta_n$ at $n >10^6$, difficulties arose due to the limited computer memory.
\item The plots of $\left(n, {\bar \Delta}_{n}(G_{1,1})-\Delta_{1}(G_{1, 1})\right)$ and $\left(n, {\bar \Delta}_{n}(G_{1,2})-\Delta_{1}(G_{1, 2})\right)$ are in the close correspondence, while the plot  $\left(n, {\bar\Delta}_{n}(G_{1,3})-\Delta_{1}(G_{1, 3})\right)$ has a shift along the vertical axis relative to the other two graphs at $0\le n\le 10^6$, which is most likely due to the size of the graph $G_{1,3}$.
It can be assumed that the influence of a relatively large initial graph should be eliminated with a large $n$.
  \end{enumerate}

\section{Gratitude}\label{sec7}
The authors express their gratitude to M.S. Ryzhov for the computer simulation.

\section{Conclusions}\label{sec5}

Motivated by \cite{Wan}, where rather wide  definition of the PA models
is given, we provide the new definition of the CA  model. By means of the latter
one can 
simulate the evolution of many random systems and networks where newly appending nodes are drawn likely not towards hubs,
but towards densely connected groups.

The properties of our CA model in which new nodes connect to existing nodes with equal probabilities are investigated. The new CA model can be qualified
as a limit (as the parameter of the attachment function $\alpha \downarrow 0$) of the CA model proposed in \cite{Bag}. We proved that the total number of triangles $\Delta_n$
tends to infinity almost surely as evolution step $n\to\infty$ without additional assumptions  on the choice of the initial graph from which the CA begins. To prove this result, it is proved that there is a negative correlation or uncorrelation of random events, consisting in the fact that pairs of existing nodes selected by new nodes at different time moments are connected by an edge, or in other words, that a new triangle is formed as a result of appending a new node. It is also shown that as the evolution step increases, the probability of obtaining a new triangle decreases, which can lead to stabilization of the growth in the number of triangles. The corollary \ref{cor} shows that the growth rate of the average number of triangles $E\Delta_n$ is higher than the logarithmic one with increasing number of evolution steps. These results are of independent interest.

Our simulation results do not contradict to the theoretical results. Specification of the upper and lower bounds for $E\Delta_n$ is
the purpose of our further research.






\section*{References}

\end{document}